\newcommand{\Map}{{\rm{Map}_{\mathbb{C}}}}
\newcommand{\Var}{{\rm{Var}_{\mathbb{C}}}}
\def\Ex{\mathbf{Exp}}
\def\Lo{\mathbf{Log}}
\def\uu{{\underline{u}}}
\def\kk{{\underline{k}}}
\def\1{\underline{1}}
\def\AA{{\mathbb A}}
\def\LLL{{\mathbb L}}
\def\Z{{\mathbb Z}}
\def\C{{\mathbb C}}
\def\Ex{{\bf{Exp}}}
\def\Lo{{\bf{Log}}}
\newtheorem{theorem}{Theorem}
\newtheorem{proposition}{Proposition}
\newenvironment{definition}
{\smallskip\noindent{\bf Definition\/}:}{\smallskip\par}
\newenvironment{corollary}
{\smallskip\noindent{\bf Corollary\/}.}{\smallskip\par}
\newenvironment{remark}
{\smallskip\noindent{\bf Remark\/}.}{\smallskip\par}
\newenvironment{proof}
{\noindent{\bf Proof\/}.}{{ $\square$}\smallskip\par}
\title{Power structure over the Grothendieck ring of maps
\footnote{Math. Subject Class.: 14A10, 18F30, 55M35. Keywords: 
lambda-structure, power structure, complex quasi-projective varieties,
maps, Grothendieck ring.}
}
\author{S.M.~Gusein-Zade \thanks{The work of the first author
(Sections~\ref{sec:Grothendieck-of-maps}, \ref{sec:effective} and~\ref{sec:versions}) 
was supported
by the grant 16-11-10018 of the Russian Science Foundation.
Address: Moscow State University, Faculty
of Mathematics and Mechanics, GSP-1, Moscow, 119991, Russia. E-mail:
sabir\symbol{'100}mccme.ru} \and I.~Luengo \thanks{The last two authors were partially
supported by a competitive Spanish national grant MTM2016-76868-C2-1-P.
Address:  ICMAT (CSIC-UAM-UC3M-UCM), Dept. of Algebra, Complutense University of Madrid,
Plaza de Ciencias 3, Madrid, 28040, Spain.
E-mail: iluengo\symbol{'100}mat.ucm.es} \and
A.~Melle-Hern\'andez \thanks{Address:  Instituto de Matem\'a¡tica Interdisciplinar (IMI), Dept. of Algebra, Complutense University of Madrid,
Plaza de Ciencias 3, Madrid, 28040, Spain. E-mail: amelle\symbol{'100}mat.ucm.es}}
\date{}
\begin{document}
\def\eps{\varepsilon}

\maketitle

\begin{abstract}
A power structure over a ring is a method to give sense to
expressions of the form $(1+a_1t+a_2t^2+\ldots)^m$, where $a_i$, $i=1, 2,\ldots$,
and $m$ are elements of the ring. The (natural) power structure over the Grothendieck ring
of complex quasi-projective varieties appeared to be useful for a number of applications.
We discuss new examples of $\lambda$- and power structures over some Grothendieck
rings of varieties. The main example is for  the Grothendieck ring of maps of
complex quasi-projective varieties. We describe two natural $\lambda$-structures on it
which lead to the same power structure. 
We show that this power structure is  effective. 
In the terms of this power structure we write some equations containing classes of Hilbert--Chow morphisms.
We describe some generalizations of this construction for maps of varieties with some additional structures.
\end{abstract}

\section{Introduction}\label{sec:Intro}
A $\lambda$-{\em structure} (called sometimes a pre-$\lambda$-structure) on a ring $R$ 
is an additive-to-multiplicative homomorphism $R\to 1+tR[[t]]$ (\cite{Knutson}).
A {\em power structure} over a ring $R$ (\cite{GLM-MRL}) is a method to give sense to
expressions of the form $(1+a_1t+a_2t^2+\ldots)^m$, where $a_i$, $i=1, 2,\ldots$,
and $m$ are elements of the ring $R$ (also as a series from $1+tR[[t]]$).
The notions of $\lambda$-structures and
power structures are closely related to each other, but are not equivalent.
In particular, each $\lambda$-structure on a ring defines a power structure over it,
but there are, in general, many $\lambda$-structures corresponding to one and the same
power structure. A ``natural'' power structure over the Grothendieck ring $K_0(\Var)$ of
complex quasi-projective varieties (see, e.g., \cite{Craw}) was described in \cite{GLM-MRL}. Its version for
the relative case (that is, over the Grothendieck ring of complex quasi-projective varieties
over a fixed variety) was defined in \cite{GLMSteklov}. A power structure over the
Grothendieck ring of complex quasi-projective varieties over an Abelian monoid was
defined in \cite{M_Sh}. (A particular case of the Grothendieck ring of varieties over an Abelian
monoid when the monoid is the Abelian group $\C$ was considered in \cite{Loeser1, Loeser2, Wyss}
under the name Grothendieck rings of varieties with exponentials.)
Power structures over Grothendieck rings of varieties appear
to be useful, in particular, for formulation and proof of formulae for the generating
series of classes of some configuration spaces or of their invariants, see, e.g.,
\cite{Michigan, BBS, BrMor, M_Sh}.

An important property of the power structure over the Grothendieck ring $K_0(\Var)$
which makes it useful for the mentioned applications is its
{\em effectiveness}. This means that if all the coefficients $a_i$ of the series
$A(t)=1+a_1t+a_2t^2+\ldots$ and the exponent $m$ are classes of complex
quasi-projective varieties (not of virtual ones: differences of such classes),
then all the coefficients of the series $(1+a_1t+a_2t^2+\ldots)^m$ are also
represented by classes of complex quasi-projective varieties. This is a somewhat
special property of this power structure. Another natural power structure over
the Grothendieck ring $K_0(\Var)$ (in fact up to now only two power structures over
this ring are known) and also its natural ``extension'' to the Grothendieck ring of
stacks (\cite{Stacks}) are not effective.

Here we discuss new examples of $\lambda$- and power structures over some Grothendieck
rings of varieties. 
The main example is for  the Grothendieck ring of maps of
complex quasi-projective varieties. We describe two natural $\lambda$-structures on it
which lead to the same power structure. 
We show that this power structure is  effective. 
In the terms of this power structure we write some equations containing classes of Hilbert--Chow morphisms.
We describe some generalizations of this construction for maps of varieties with some additional structures.

We are  thankful to a referee of the initial version of this paper submitted to another journal
for finding a crucial mistake in it.


\section{Power structures and $\lambda$-structures}\label{sec:Power_lambda}
A {\em power structure} over a ring $R$ is a method to give sense to
expressions of the form $\left(A(t)\right)^m$, where
$A(t)=1+a_1t+a_2t^2+\ldots$ is a formal series with the coefficients $a_i$ from $R$,
the exponent $m$ is also an element of $R$.

\begin{definition} (\cite{GLM-MRL})
 A {\em power structure} over a ring $R$ with unity $1$ is a map
 $$
 \left(1+tR[[t]]\right)\times R\to 1+tR[[t]]
 $$
 $\left(\left(A(t), m\right)\mapsto\left(A(t)\right)^m\right)$ which possesses the 
 properties of the exponential function, namely:
 \begin{enumerate}
\item[(1)]  $\left(A(t)\right)^0=1$,
\item[(2)]  $\left(A(t)\right)^1=A(t)$,
\item[(3)]  $\left(A(t)\cdot B(t)\right)^{m}=\left(A(t)\right)^{m}\cdot
\left(B(t)\right)^{m}$,
\item[(4)]  $\left(A(t)\right)^{m+n}=\left(A(t)\right)^{m}\cdot
\left(A(t)\right)^{n}$,
\item[(5)]  $\left(A(t)\right)^{mn}=\left(\left(A(t)\right)^{n}\right)^{m}$,
\item[(6)] $(1+t)^m=1+mt+$ terms of higher degree,
\item[(7)]  $\left(A(t^k)\right)^m =
\left(A(t)\right)^m\raisebox{-0.5ex}{$\vert$}{}_{t\mapsto t^k}$.
\end{enumerate}
\end{definition}

\begin{remark} 
In \cite{GLM-MRL} the properties~$(6)$ and $(7)$ were not required, though the
constructed power structures possessed them.
\end{remark}

\begin{definition}
 A power structure over a ring $R$ is {\em finitely determined} if the fact that
 two series $A_1(t)$ and $A_2(t)$ from $1+tR[[t]]$ differ by terms of degree $\ge k$
 (that is $A_1(t)-A_2(t)\in \mathfrak{m}^k$, where $\mathfrak{m}=\langle t\rangle \subset R[[t]]$)
 implies that $\left(A_1(t)\right)^m-\left(A_2(t)\right)^m\in \mathfrak{m}^k$.
\end{definition}

A natural power structure over the ring $\Z$ of integers (and the only one) is
defined by the usual equation for an exponent of a series (see, e.g., \cite{St}, page~40)
\begin{eqnarray}
  \hspace{-20pt}&\ & (1+a_1t+a_2t^2+\ldots)^m=\nonumber\\
  \hspace{-20pt}&=&1+\sum_{k=1}^{\infty}\left(\sum_{\{k_i\}:\sum_i ik_i=k}
  \frac{m(m-1)\ldots(m-\sum_i k_i+1)\times\prod_i a_i^{k_i}}{\prod_i k_i!}  
  \right)\cdot t^k. \label{power-Z}
\end{eqnarray}
(Obviously this power structure is finitely determined.)

The power structure over the Grothendieck ring $K_0(\Var)$ of complex quasi-projective
varieties defined in \cite{GLM-MRL} is given by the equation
  \begin{eqnarray}\label{power-Grothendieck}
  \hspace{-20pt}&\ & (1+[A_1]t+[A_2]t^2+\ldots)^{[M]}=\nonumber\\
  \hspace{-20pt}&=&1+\sum_{k=1}^{\infty}\left(
  \sum_{\{k_i\}:\sum_i ik_i=k}
  \left[
  \left.
  \left(
  \left(
  M^{\sum_i k_i}\setminus \Delta
  \right)
  \times\prod_i A_i^{k_i}
  \right)
  \right/
  \prod_i S_{k_i}
  \right]
  \right)
  \cdot t^k\,,\label{power-K_0}
 \end{eqnarray}
where $[A_i]$, $i=1, 2, \ldots$, and $[M]$ are classes in $K_0(\Var)$ of complex
quasi-projective varieties, $\Delta$ is the ``large diagonal'' in $M^{\sum_i k_i}$,
that is the set of (ordered) collections of $\sum_i k_i$ points from $M$ with at least
two coinciding ones, the group $S_{k_i}$ of permutations on $k_i$ elements acts by
simultaneous permutations on the components of the corresponding factor $M^{k_i}$ in
$M^{\sum_i k_i}=\prod_i M^{k_i}$ and on the components of the factor $A_i^{k_i}$.
 
Except the Grothendieck ring of complex quasi-projective varieties one can consider the
Grothendieck semiring $S_0(\Var)$. It is defined in the same way as $K_0(\Var)$ with the word group
substituted by the word semigroup. Elements of the semiring $S_0(\Var)$ are represented by
``genuine'' complex quasi-projective varieties, not by virtual ones (that is formal differences
of varieties). Two complex quasi-projective varieties $X$ and $Y$ represent the same element
of the semiring $S_0(\Var)$ if and only if they are piece-wise isomorphic, that is if there
exist decompositions $X =\bigsqcup_{i=1}^sX_i$ and $Y =\bigsqcup_{i=1}^sY_i$ into Zariski locally
closed subsets such that $X_i$ and $Y_i$ are isomorphic for $i = 1, \ldots , s$. There is a natural map
(a semiring homomorphism) from $S_0(\Var)$ to $K_0(\Var)$. (It is not known whether or not this
map is injective.)

A power structure over the Grothendieck ring $K_0(\Var)$ is called {\em effective} if the
fact that all the coefficients $a_i$ of the series $A(t)$ and the exponent $m$ are represented
by classes of complex quasi-projective varieties (i.e., belong to the image of the map
$S_0(\Var)\to K_0(\Var)$) implies that all the coefficients of the series $\left( A(t)\right)^m$
are also represented by such classes. Roughly speaking this means that the power structure can be
defined over the Grothendieck semiring $S_0(\Var)$. The same concept is used for Grothendieck
rings of complex quasi-projective varieties with additional structures. The effectiveness of the
described power structure over the Grothendieck ring $K_0(\Var)$ is clear from Equation~(\ref{power-K_0}).

An equation similar to (\ref{power-K_0}) was given in \cite{M_Sh} for a power structure over the
Grothendieck ring of complex quasi-projective varieties over an Abelian monoid used there.

Power structures over a ring are related to $\lambda$-structures on it.
Let $R$ be a ring with a $\lambda$-structure, that is, for each $a\in R$ there is defined a series
$\lambda_a(t)=1+at+\ldots\in 1+tR[[t]]$ so that $\lambda_{a+b}(t)=\lambda_a(t)\lambda_b(t)$
(in other words one has an additive-to-multiplicative homomorphism $R\to 1+tR[[t]]$;
see, e.g., \cite{Knutson}). A $\lambda$-structure $\lambda_a(t)$ defines a
(finitely determined) power structure
over $R$ in the following way. Any power series $A(t)\in 1+tR[[t]]$ can be in a unique way represented as
the product $A(t)=\prod\limits_{i=1}^{\infty}\lambda_{b_i}(t^i)$ with $b_i\in R$.
Then one defines the series $\left(A(t)\right)^m$ by
\begin{equation}\label{lambda-power}
 \left(A(t)\right)^m:=\prod\limits_{i=1}^{\infty}\lambda_{mb_i}(t^i)\,.
\end{equation}

\begin{remark}
 A $\lambda$-structure on a ring $R$ defines maps ${\Ex}: tR[[t]]\to 1+tR[[t]]$ and
 ${\Lo}: 1+tR[[t]]\to tR[[t]]$ (inverse to each other) in the following way:
 $$
 {\Ex}(b_1t+b_2t^2+\ldots):=\prod_{k\ge 1}\lambda_{b_k}(t^k)\,;
 $$
 if $1+a_1t+a_2t^2+\ldots=\prod\limits_{k=1}^{\infty}\lambda_{b_k}(t^k)$, then
 $$
 {\Lo}(1+a_1t+a_2t^2+\ldots):=\sum\limits_{k=1}^{\infty}b_kt^k\,.
 $$
 The map ${\Ex}$ is an additive-to-multiplicative homomorphism.
 The map ${\Lo}$ is a multiplicative-to-additive homomorphism. 
 Each of these maps determines the $\lambda$-structure on the ring, see, e.g., \cite{GLM-MRL}. 
\end{remark}

One can show that the power structure (\ref{power-K_0}) over the Grothendieck
ring $K_0(\Var)$ corresponds to the $\lambda$-structure on it defined by the
\emph{Kapranov motivic  zeta function} (\cite{K})
$$
\zeta_{[X]}(t)=1+[X]t+[S^2X]t^2+[S^3X]t^3+\ldots,
$$
where $S^kX=X^k/S_k$ is the $k$th symmetric power of the variety $X$.
In terms of the power structure one has $\zeta_{[X]}(t)=(1+t+t^2+\ldots)^{[X]}=(1-t)^{-[X]}$.

There are many $\lambda$-structures corresponding to one and the same power structure
over a ring $R$. For any series $\lambda_1(t)=1+t+a_2t^2+\ldots$ the equation
$$
\lambda_a(t):=\left(\lambda_1(t)\right)^a
$$
gives a $\lambda$-structure on the ring $R$. For example, the power structure (\ref{power-K_0}) over $K_0(\Var)$ 
can be defined both by the   Kapranov zeta function and by the
$\lambda$-structure $$\lambda_{[X]}(t):=(1+t)^{[X]}=1+[X]t+[B_2 X]t^2+[B_3X]t^3+\ldots,$$
where $B_k X:=(X^k\setminus \Delta)/S_k$ is the configuration space of $k$ distinct
unordered points of $X$ (see~\cite{GLM-MRL}).

Another ``natural'' $\lambda$-structure on the Grothendieck ring $K_0(\Var)$ 
({\em opposite} to the one defined by the Kapranov zeta function $\zeta_{[X]}(t)$) is defined
by the series $\zeta_{-[X]}(-t)$. One can show that the corresponding power structure
over the ring $K_0(\Var)$ is not effective (see \cite{Stacks}). (It seems that one knows
no power structures over the ring $K_0(\Var)$ except the described two.)

Let $R_1$ and $R_2$ be rings with power structures over them. A ring
homomorphism $\varphi:R_1\to R_2$ induces the natural homomorphism
$R_1[[t]]\to R_2[[t]]$ (also denoted by $\varphi$) by
$\varphi\left(\sum_i a_it^i\right)=\sum_i\varphi(a_i)\,t^i$.
One has the following statement.

\begin{proposition}\cite{Michigan}\label{prop2}
If a ring homomorphism $\varphi:R_1\to R_2$ is such that
$\varphi\left((1-t)^{-a}\right)=(1-t)^{-\varphi(a)}$, then
$\varphi\left(\left(A(t)\right)^m\right)=
\left(\varphi\left(A(t)\right)\right)^{\varphi(m)}$.
\end{proposition}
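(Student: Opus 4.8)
The plan is to show that the power operation is completely determined by its restriction to the one-parameter family $a\mapsto(1-t)^{-a}$, and then to observe that $\varphi$ preserves exactly this family by hypothesis. First I would note that, by property~$(4)$, the assignment $\lambda_a(t):=(1-t)^{-a}$ is a $\lambda$-structure on $R$, since $\lambda_{a+b}(t)=(1-t)^{-(a+b)}=(1-t)^{-a}(1-t)^{-b}=\lambda_a(t)\lambda_b(t)$. By property~$(6)$ (together with finite determinedness) one has $\lambda_a(t)=1+at+\cdots$, so by property~$(7)$ the factor $(1-t^i)^{-b_i}=\bigl((1-t)^{-b_i}\bigr)\big\vert_{t\mapsto t^i}=1+b_it^i+\cdots$. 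Consequently every series $A(t)=1+a_1t+\cdots\in1+tR[[t]]$ admits a unique factorization
\[
A(t)=\prod_{i\ge1}(1-t^i)^{-b_i},\qquad b_i\in R,
\]
the $b_i$ being found recursively: the coefficient of $t^k$ on the right is $b_k$ plus a universal polynomial in $b_1,\dots,b_{k-1}$. The product converges $\mathfrak{m}$-adically because its $i$-th factor lies in $1+\mathfrak{m}^i$. (This is just the factorization attached to the $\lambda$-structure $\lambda_a$ in the Remark above.)

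Next I would compute the power operation in these coordinates. From property~$(5)$ with base $(1-t)^{-1}$ one gets $\bigl((1-t)^{-b}\bigr)^m=(1-t)^{-mb}$, and then property~$(7)$ upgrades this to $\bigl((1-t^i)^{-b_i}\bigr)^m=(1-t^i)^{-mb_i}$. Multiplying these together by property~$(3)$ gives the key formula
\[
\Bigl(\prod_{i\ge1}(1-t^i)^{-b_i}\Bigr)^{m}=\prod_{i\ge1}(1-t^i)^{-mb_i},
\]
which says that $(A(t))^m$ is governed entirely by the family $(1-t)^{-a}$ and by the ring operations.

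Finally I would transport everything along $\varphi$. The induced map on power series commutes with products and with the substitution $t\mapsto t^i$ (which only reindexes coefficients), so from the hypothesis $\varphi\bigl((1-t)^{-a}\bigr)=(1-t)^{-\varphi(a)}$ I obtain $\varphi\bigl((1-t^i)^{-b_i}\bigr)=(1-t^i)^{-\varphi(b_i)}$ and hence $\varphi(A(t))=\prod_{i\ge1}(1-t^i)^{-\varphi(b_i)}$, which is precisely the canonical factorization of $\varphi(A(t))$ in $R_2$. Applying the displayed formula in both rings and using $\varphi(mb_i)=\varphi(m)\varphi(b_i)$ then yields
\begin{align*}
\varphi\bigl((A(t))^{m}\bigr)
&=\prod_{i\ge1}(1-t^i)^{-\varphi(mb_i)}
=\prod_{i\ge1}(1-t^i)^{-\varphi(m)\varphi(b_i)}\\
&=\Bigl(\prod_{i\ge1}(1-t^i)^{-\varphi(b_i)}\Bigr)^{\varphi(m)}
=\bigl(\varphi(A(t))\bigr)^{\varphi(m)},
\end{align*}
as required.

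The main obstacle is the passage from finite to infinite products: property~$(3)$ only covers finitely many factors, so the reduction formula must be justified degree by degree. For each $N$ one writes $A(t)=\prod_{i\le N}(1-t^i)^{-b_i}\cdot A_{>N}(t)$ with $A_{>N}(t)\equiv1\pmod{\mathfrak{m}^{N+1}}$; property~$(3)$ handles the finite part, and finite determinedness (together with $(1)^m=1$, which follows formally from the axioms) forces $(A_{>N}(t))^m\equiv1\pmod{\mathfrak{m}^{N+1}}$, so that both sides agree modulo every $\mathfrak{m}^{N+1}$. This is where finite determinedness is essential, and it is the only nonformal ingredient—it holds for all the power structures considered here, since they arise from $\lambda$-structures. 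By contrast, the recursive construction of the $b_i$ and the commutation of $\varphi$ with $t\mapsto t^i$ are routine.
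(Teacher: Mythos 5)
Your proof is correct and follows essentially the same route as the source the paper cites for this statement (the paper itself gives no proof, referring to \cite{Michigan}): one writes $A(t)=\prod_{i\ge1}(1-t^i)^{-b_i}$, uses the axioms to get $\left(A(t)\right)^m=\prod_{i\ge1}(1-t^i)^{-mb_i}$, and applies $\varphi$ coefficientwise. Your explicit handling of the infinite product via finite determinedness, and your remark that this hypothesis (satisfied by all power structures arising from $\lambda$-structures, hence by all those in the paper) is the one non-formal ingredient, are both accurate and welcome.
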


Equations written in terms of the power structure (\ref{power-K_0}) give equations
for the Euler characteristics with compact support $\chi(\bullet)$ and for the \emph{Hodge--Deligne polynomial}
$e_{\bullet}(u,v)$ via the natural homomorphisms $\chi:K_0(\Var)\to\Z$ and
$e:K_0(\Var)\to\Z[u,v]$. These homomorphisms are compatible with the power
structures over the rings $\Z$ (see Equation~(\ref{power-Z})) and $\Z[u,v]$, where the
power structure over the latter is defined as follows. 

Let $\Z[u_1,\ldots, u_r]$ be the ring of polynomials in $r$ variables.
Let $P(u_1,\ldots,u_r)=
\sum\limits_{\kk\in\Z_{\ge0}^r}p_\kk \uu^\kk\in \Z[u_1,\ldots,u_r]$,
where $\kk=(k_1,\ldots,k_r)$,
$\uu=( u_1,\ldots,u_r)$, $\uu^\kk =u_1^{k_1}\cdot\ldots\cdot u_r^{k_r}$,
$ p_\kk\in\Z$. Let
$$\lambda_P(t):=
\prod\limits_{\kk\in\Z_{\ge0}^r} (1-\uu^\kk t)^{-p_\kk},$$
where the power (with an integer exponent $-p_\kk$) means the usual one.
The series $\lambda_P(t)$ defines a $\lambda$-structure on the ring $\Z[u_1,\ldots, u_r]$
and therefore a power structure over it (with $\lambda_P(t)=(1-t)^{-P}$).

Let $r=2$, $u_1=u$, $u_2=v$. Let $e: K_0({\Var})\to {\Z}[u,v]$
be the ring homomorphism which sends the
class $[X]$ of a quasi-projective variety $X$ to its Hodge--Deligne
polynomial $e_X(u,v)=\sum_{i,j} h_X^{ij}(-u)^i(-v)^j$. 
One can see that the homomorphism $e$ respects the $\lambda$- and therefore the
power structures over the source and over the target. This is shown in \cite{Cheah},\cite{Michigan}:
in terms of the power structures Proposition 1.2  in \cite{Cheah} can be rewritten as
$$
e\left((1-t)^{-[X]}\right)=(1-t)^{-e_X(u,v)}.
$$

\section{Grothendieck ring of maps}\label{sec:Grothendieck-of-maps}

Let us consider (regular) maps $f:X \to Y$ between complex quasi-projective
varieties.  

\begin{definition}
 Maps $f:X \to Y$ and  $f':X' \to Y'$ between complex quasi-projective varieties are \emph{equivalent}
 if there exist isomorphisms $h_1:X\to X'$ and $h_2:Y\to Y'$ such that $h_2 \circ f=f'\circ h_1$.
\end{definition}

The definition of the Grothendieck
ring of maps is inspired by the one for the Grothendieck ring of varieties $K_0(\Var)$.

\begin{definition} The Grothendieck ring $K_0(\Map)$ of maps between complex quasi-projective
varieties is the free Abelian group generated by the classes $[X \stackrel{f}{\rightarrow} Y]$ of maps modulo the relations:
 \begin{enumerate}
 \item[1)] if two maps $f:X \to Y$ and  $f':X' \to Y'$ are  equivalent,
 then 
 $$
 [X \stackrel{f}{\rightarrow} Y]=[X' \stackrel{f'}{\rightarrow} Y'];
 $$
 \item[2)] if $f:X \to Y$ is a map and $Z$ is a Zariski closed subset of $Y$, then 
$$
[X \stackrel{f}{\rightarrow} Y]=[f^{-1}(Z) \stackrel{\; \; f|_{f^{-1}(Z)} \; \,}{\longrightarrow} Z]
+[f^{-1}(Y\setminus Z) \stackrel{\; \; f|_{f^{-1}(Y\setminus Z)} \; \,}{\longrightarrow} Y\setminus Z];
$$
 \item[3)]  if $f:X \to Y$ is a map and $Z$ is a Zariski closed subset of $X$, then 
$$
[X \stackrel{f}{\rightarrow} Y]=[Z \stackrel{\; \; f|_{Z} \; \,}{\longrightarrow} Y]
+[X\setminus Z \stackrel{\; \; f|_{X\setminus Z} \; \,}{\longrightarrow} Y].
$$
\end{enumerate}
\end{definition}

The definition  means that the summation in $K_0(\Map)$ can be defined by the disjoint union, that is  
$$
[X_1 \stackrel{f_1}{\rightarrow} Y_2] +[X_2 \stackrel{f_2}{\rightarrow} Y_2]:=
[X_1 \sqcup  X_2 \stackrel{f_1 \sqcup f_2}{\longrightarrow} Y_1 \sqcup Y_2].
$$
The multiplication in $K_0(\Map)$ is defined by the Cartesian product:
$$[X_1 \stackrel{f_1}{\rightarrow} Y_2] \cdot [X_2 \stackrel{f_2}{\rightarrow} Y_2]:=
[X_1 \times  X_2 \stackrel{f_1 \times f_2}{\longrightarrow} Y_1 \times  Y_2].$$
The unit $\boldsymbol{1}$ in $K_0(\Map)$ is represented by the identity map form a 
point to itself.

\begin{remark}
1) The relation $3)$ applied to $Z=\emptyset$ implies that $[\emptyset \to Y]=0$.

2) Since, for a scheme $X$, its reduction  $X_{red}$ is a Zariski closed subset of $X$, then using 2) and 3) one has
 $[X \stackrel{id}{\rightarrow} X]=[X_{red} \stackrel{id}{\rightarrow} X_{red}]$.
\end{remark}

There is a natural homomorphism from $K_0(\Map)$ to the Grothendieck ring $K_0(\Var)$ of complex
quasi-projective varieties 
which sends $[X \stackrel{f}{\rightarrow}Y]$  to $[X]$.
(The correspondence $[X \stackrel{f}{\rightarrow}Y]\mapsto [Y]$ is not well defined.) 
This map has two natural ``sections'':
injective  ring homomorphisms
$K_0(\Var)\to K_0(\Map)$ defined by $[X] \mapsto [X \stackrel{id}{\rightarrow} X] $ and
$[X]\mapsto [X {\rightarrow}\ pt ]$ respectively, where $pt$ is a one point set.

In the same way (substituting the word group by the word semigroup) one can define the Grothendieck semiring 
$S_0(\Map)$ of maps between complex quasi-projective varieties. The elements in $S_0(\Map)$ are represented
by classes of genuine maps, not by virtual ones (that is formal differences of maps). The classes in $S_0(\Map)$ of
regular  maps $f_1:X_1 \to Y_1$ and $f_2:X_2 \to Y_2$ are equal if and only if they are piece-wise isomorphic,
that is  if there exist partitions $X_1= \bigsqcup_{i=1}^n X_{1,i}$ and 
$X_2= \bigsqcup_{i=1}^n X_{2,i}$ such that the maps 
$f_1:X_{1,i} \to f_1(X_{1,i})$ and 
$f_2:X_{2,i} \to  f_2(X_{2,i})$ are  equivalent for all $i=1, \ldots, n$.

\begin{remark}
It is easy to see that the subring  of the Grothendieck ring of $K_0(\Map)$ generated by the classes of maps
of zero-dimensional varieties (that is of finite sets) is isomorphic to $\Z$. 
\end{remark}

\section{$\lambda$-structures over the Grothendieck ring of maps}\label{sec:lambdas-on-maps}

Let us describe two natural $\lambda$-structures over the ring $K_0(\Map)$.

For a map $f:X\to Y$, one has the natural map $S^k f:S^k X \to S^k Y$
between the $k$th symmetric powers of $X$ and $Y$.  
(Pay attention that the map $f$ does not define a map between the configuration spaces
$B_k X=\left(X^k \setminus \Delta\right)/S_k$ and $B_k Y=\left(Y^k \setminus \Delta\right)/S_k$ of
$k$ distinct points on $X$ and $Y$ respectively.)

\begin{definition}
 The \emph{Kapranov motivic zeta function of a map} $f:X\to Y$ is defined by
\begin{equation}\label{Kapranov-maps}
 \zeta_{[X \stackrel{f}{\rightarrow} Y]}(t):=\boldsymbol{1}+\sum_{k\geq 1} 
[S^k X \stackrel{S^k f}{\longrightarrow} S^k Y]\cdot t^k \in \boldsymbol{1}+t K_0(\Map)[[t]].
\end{equation}
\end{definition}

\begin{proposition}\label{Kapranov-lambda}
The Kapranov motivic zeta function defines a $\lambda$-structure on the ring $K_0(\Map)$.
 \end{proposition}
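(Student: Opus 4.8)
The plan is to check directly that the rule $[X\stackrel{f}{\rightarrow}Y]\mapsto\zeta_{[X\stackrel{f}{\rightarrow}Y]}(t)$ of~(\ref{Kapranov-maps}) extends to a well-defined additive-to-multiplicative homomorphism $K_0(\Map)\to\boldsymbol{1}+tK_0(\Map)[[t]]$. Since $K_0(\Map)$ is the free Abelian group on equivalence classes of maps modulo the scissor relations $2)$ and $3)$, I have to establish two things: that $\zeta$ sends disjoint unions to products, and that it respects the defining relations. The single geometric input I would use throughout is the relative version, for a map, of the classical decomposition of symmetric powers of a disjoint union: for $f_1\sqcup f_2\colon X_1\sqcup X_2\to Y_1\sqcup Y_2$ there is a canonical isomorphism of maps
\[
S^n(f_1\sqcup f_2)\;\cong\;\bigsqcup_{k+l=n}\bigl(S^kf_1\times S^lf_2\bigr),
\]
because a degree-$n$ effective $0$-cycle on $X_1\sqcup X_2$ splits uniquely into a degree-$k$ cycle on $X_1$ and a degree-$l$ cycle on $X_2$, and — this is the point — its image splits the same way, since $Y_1$ and $Y_2$ are disjoint and no points of the image are merged.

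Granting this, additivity is immediate. Taking the coefficient of $t^n$ and using that multiplication in $K_0(\Map)$ is the Cartesian product, the displayed isomorphism gives $[S^n(f_1\sqcup f_2)]=\sum_{k+l=n}[S^kf_1]\cdot[S^lf_2]$, which is exactly the Cauchy product expressing $\zeta_{[f_1]+[f_2]}(t)=\zeta_{[f_1]}(t)\,\zeta_{[f_2]}(t)$; and $\zeta_{[f]}(t)=\boldsymbol{1}+[f]\,t+\cdots$ because $S^1f=f$. Thus the additive-to-multiplicative law holds on generators.

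It remains to see that $\zeta$ descends to $K_0(\Map)$. Relation $1)$ is automatic: an equivalence $(h_1,h_2)$ of $f$ and $f'$ induces equivalences $(S^kh_1,S^kh_2)$ of $S^kf$ and $S^kf'$, so the series agree term by term. Relation $2)$, cutting the target by a closed $Z\subseteq Y$, is governed by the same mechanism as additivity: the stratification $S^nX=\bigsqcup_{k+l=n}S^k(f^{-1}Z)\times S^l(f^{-1}(Y\setminus Z))$ is compatible with $S^nf$, whose $(k,l)$-stratum maps to $S^kZ\times S^l(Y\setminus Z)$ by $S^k(f|)\times S^l(f|)$; because $Z$ and $Y\setminus Z$ are disjoint in $Y$ no cycles merge, so applying relations $2)$ and $3)$ in $K_0(\Map)$ to this stratification yields $\zeta_{[f]}=\zeta_{[f|_{f^{-1}Z}]}\cdot\zeta_{[f|_{f^{-1}(Y\setminus Z)}]}$.

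The step I expect to be the real obstacle is relation $3)$, where the \emph{source} is cut by a closed $Z\subseteq X$ but the target $Y$ is kept. With the analogous stratification $S^nX=\bigsqcup_{k+l=n}S^kZ\times S^l(X\setminus Z)$, the restriction of $S^nf$ to the $(k,l)$-stratum is now $S^k(f|_Z)\times S^l(f|_{X\setminus Z})$ \emph{followed by the cycle-addition map} $S^kY\times S^lY\to S^nY$; once $f(Z)$ and $f(X\setminus Z)$ overlap this addition really does identify points, so the restricted map is not the plain Cartesian product whose class enters $\zeta_{[f|_Z]}\cdot\zeta_{[f|_{X\setminus Z}]}$. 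Everything therefore reduces to proving
\[
[S^nf]=\sum_{k+l=n}\bigl[\,S^k(f|_Z)\times S^l(f|_{X\setminus Z})\,\bigr]\quad\text{in }K_0(\Map),
\]
i.e. that composing with these symmetric-product addition maps leaves the class unchanged. I would try to establish this as a separate lemma by stratifying the target $S^nY$ according to the coincidence pattern of the merged points and comparing, piece by piece, the fibres of $S^nf$ with those of the product map, reducing to the model addition map $S^kY\times S^lY\to S^nY$. Controlling these identifications — making sure that the overlap strata are bookkept so as to reproduce exactly the product $\zeta_{[f|_Z]}\cdot\zeta_{[f|_{X\setminus Z}]}$ — is the crux of the argument and the place where a careless treatment of the common target would fail.
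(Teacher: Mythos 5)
Your opening computation --- the decomposition $S^n(f_1\sqcup f_2)\cong\bigsqcup_{k+l=n}S^kf_1\times S^lf_2$ and the resulting Cauchy-product identity --- is exactly the paper's entire proof: the authors verify the additive-to-multiplicative law on disjoint unions and stop there. You go further by asking whether $\zeta$ actually descends to $K_0(\Map)$; your treatment of relations $1)$ and $2)$ is correct (for relation $2)$ the cycle-addition map $S^kZ\times S^l(Y\setminus Z)\to S^nY$ is injective because $Z$ and $Y\setminus Z$ are disjoint in $Y$, so the strata of $S^nf$ really are the product maps), and you correctly isolate relation $3)$ as the one step that does not follow from the disjoint-union decomposition.

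The gap is the ``separate lemma'' you defer for relation $3)$, namely $[S^nf]=\sum_{k+l=n}\bigl[S^k(f|_Z)\times S^l(f|_{X\setminus Z})\bigr]$, and it is not a bookkeeping issue: as far as I can see the statement is false, so no stratification argument will close it. Consider the map $\mu:K_0(\Map)\to K_0(\Var)$ sending $[X\stackrel{f}{\rightarrow}Y]$ to $\sum_i\chi\bigl(f^{-1}(y_i)\bigr)\,[Y_i]$, where $\{Y_i\}$ is a stratification of $Y$ on which the Euler characteristic with compact support of the fibre is constant. This is a well-defined homomorphism: it is local on the target (relation $2)$) and additive in the fibre (relation $3)$), precisely because $\chi$ is additive. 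Now take $f$ the fold map $\C\sqcup\C\to\C$ and $Z$ one of the two copies, so that relation $3)$ gives $[\C\sqcup\C\to\C]=2[\C\stackrel{id}{\rightarrow}\C]$. The coefficient of $t^2$ in $\zeta_{[\C\sqcup\C\to\C]}(t)$ contains the stratum $\C\times\C\to S^2\C$, the degree-two quotient map, contributing $2\LLL^2-\LLL$ to $\mu$, so the total is $4\LLL^2-\LLL$; the corresponding coefficient of $\zeta_{[\C\to\C]}(t)^2$ is $2[S^2\C\stackrel{id}{\rightarrow}S^2\C]+[\C^2\stackrel{id}{\rightarrow}\C^2]$ with $\mu=3\LLL^2$. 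Since $\LLL^2\neq\LLL$ in $K_0(\Var)$, composing with the cycle-addition maps $S^kY\times S^lY\to S^nY$ genuinely changes the class whenever images merge --- exactly the failure mode you feared. So you have located the one step the paper's own proof is silent about, but that step cannot be carried out as stated; repairing it requires changing either the ring or the definition of $\zeta$, not a finer stratification.
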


\begin{proof}
It is necessary to show that, for two maps $f_1:X_1\to Y_1$ and $f_2:X_2\to Y_2$, one has 
$$
\zeta_{[X_1 \sqcup X_2 \stackrel{f_1 \sqcup f_2}{\longrightarrow} Y_1 \sqcup Y_2]}(t)
=\zeta_{[X_1 \stackrel{f_1}{\rightarrow} Y_1]}(t) \cdot \zeta_{[X_2 \stackrel{f_2}{\rightarrow} Y_2]}(t).
$$ 
This follows from the following obvious relation
$$
S^k(X_1 \sqcup X_2) \stackrel{S^k (f_1 \sqcup f_2)}{\longrightarrow} S^k (Y_1 \sqcup Y_2)=
\bigsqcup_{i=0}^k \left( S^i X_1 \stackrel{S^i (f_1 )}{\longrightarrow} S^i Y_1\right) \times 
\left(S^{k-i} X_2 \stackrel{S^{k-i} (f_2 )}{\longrightarrow} S^{k-i} Y_2\right).
$$
\end{proof}

Let $B_kX:=(X^k\setminus\Delta)/S_k$ be the configuration space of collections of $k$  different
points in $X$ ($\Delta$ is the big diagonal in $X_k$ consisting of $k$-tuples of points of $X$ with
at least two coinciding ones).
For a map $f:X\to Y$, one has the corresponding map $B_{k}f:B_{k}X\to S^kY$
from the configuration space of $k$ distinct points on $X$ to the $k$th symmetric power of the variety $Y$.
Let
\begin{equation}\label{lambda2-maps}
\lambda_{[X \stackrel{f}{\rightarrow} Y]}(t):=\boldsymbol{1}+\sum_{k\geq 1} 
[B_{k}X \stackrel{B_{k}f}{\longrightarrow}  S^kY]\cdot t^k \in \boldsymbol{1}+t K_0(\Map)[[t]].
\end{equation}

\begin{proposition}\label{lambda-lambda2}
The series $\lambda_{[X \stackrel{f}{\rightarrow} Y]}(t)$ defines a $\lambda$-structure on
the ring $K_0(\Map)$.
\end{proposition}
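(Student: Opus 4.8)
The plan is to show that the assignment $[X \stackrel{f}{\rightarrow} Y] \mapsto \lambda_{[X \stackrel{f}{\rightarrow} Y]}(t)$ is an additive-to-multiplicative homomorphism from $K_0(\Map)$ to $\boldsymbol{1}+tK_0(\Map)[[t]]$, following the same template as the proof of Proposition~\ref{Kapranov-lambda}. As there, the essential point is multiplicativity on the additive generators: for two maps $f_1:X_1\to Y_1$ and $f_2:X_2\to Y_2$ one must check
$$
\lambda_{[X_1 \sqcup X_2 \stackrel{f_1 \sqcup f_2}{\longrightarrow} Y_1 \sqcup Y_2]}(t)
=\lambda_{[X_1 \stackrel{f_1}{\rightarrow} Y_1]}(t)\cdot\lambda_{[X_2 \stackrel{f_2}{\rightarrow} Y_2]}(t).
$$

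First I would establish the analogue of the disjoint-union identity used for symmetric powers, but now for configuration spaces on the source side. A collection of $k$ pairwise distinct points of $X_1\sqcup X_2$ splits uniquely into the points lying in $X_1$ and those lying in $X_2$; since points in the two (open and closed) components are automatically distinct, distinctness of the whole collection is equivalent to distinctness within each component. This yields an isomorphism of varieties
$$
B_k(X_1\sqcup X_2)=\bigsqcup_{i=0}^k B_i X_1\times B_{k-i}X_2.
$$
The combinatorics of this splitting are identical to those of the symmetric-power identity $S^k(Y_1\sqcup Y_2)=\bigsqcup_{i=0}^k S^iY_1\times S^{k-i}Y_2$ used in Proposition~\ref{Kapranov-lambda}; only the local condition --- \emph{distinct} on the source versus \emph{with multiplicities} on the target --- differs, and each is preserved component by component.

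Next I would verify that these two decompositions are compatible with the maps $B_k(f_1\sqcup f_2)$. Applying $f_1\sqcup f_2$ to a configuration consisting of $i$ distinct points of $X_1$ together with $k-i$ distinct points of $X_2$, and then forgetting the order, lands --- because $Y_1$ and $Y_2$ are disjoint --- in the stratum $S^iY_1\times S^{k-i}Y_2$ of $S^k(Y_1\sqcup Y_2)$, where it agrees with $(B_i f_1)\times(B_{k-i}f_2)$. Hence
$$
\Bigl(B_k(X_1\sqcup X_2)\stackrel{B_k(f_1\sqcup f_2)}{\longrightarrow}S^k(Y_1\sqcup Y_2)\Bigr)
=\bigsqcup_{i=0}^k\Bigl(B_iX_1\stackrel{B_if_1}{\longrightarrow}S^iY_1\Bigr)\times\Bigl(B_{k-i}X_2\stackrel{B_{k-i}f_2}{\longrightarrow}S^{k-i}Y_2\Bigr).
$$
Passing to classes in $K_0(\Map)$ and summing over $k$ with weight $t^k$ turns the right-hand side into the Cauchy product of the two series $\lambda_{[X_1 \stackrel{f_1}{\rightarrow} Y_1]}(t)$ and $\lambda_{[X_2 \stackrel{f_2}{\rightarrow} Y_2]}(t)$, which gives the required multiplicativity.

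The step I expect to need the most care is precisely this target compatibility: unlike in the Kapranov case, the source carries a configuration space while the target carries a symmetric power, so I must make sure that the image collection, which may acquire multiplicities under $f$, is correctly recorded in $S^k(Y_1\sqcup Y_2)$ and that no cross-terms between $Y_1$ and $Y_2$ appear. The disjointness of $Y_1$ and $Y_2$ is exactly what makes the target stratify as a clean product and keeps each stratum's map factoring as $(B_if_1)\times(B_{k-i}f_2)$; once this is in place, the remaining verification is a direct transcription of the argument for Proposition~\ref{Kapranov-lambda}.
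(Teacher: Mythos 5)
Your proposal is correct and follows essentially the same route as the paper: the paper's proof consists precisely of the identity
$B_{k}(X_1 \sqcup X_2) \stackrel{B_{k}(f_1 \sqcup f_2)}{\longrightarrow} S^k(Y_1 \sqcup Y_2)
=\bigsqcup_{i=0}^k (B_{i}X_1 \to S^iY_1)\times(B_{k-i}X_2 \to S^{k-i}Y_2)$,
stated ``just as in'' the Kapranov case, and you have simply spelled out the justification (splitting a distinct-point configuration by components on the source, and using disjointness of $Y_1$ and $Y_2$ to stratify the symmetric power on the target) that the paper leaves implicit.
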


\begin{proof}
Just as in Propositions~\ref{Kapranov-lambda}, 
for two maps $f_1:X_1\to Y_1$ and $f_2:X_2\to Y_2$, one has 
$$
B_{k}\left(X_1 \sqcup X_2\right) \stackrel{B_{k}(f_1 \sqcup f_2)}\longrightarrow
S^k\left(Y_1 \sqcup Y_2\right)
=\bigsqcup_{i=0}^k (B_{i}X_1 \stackrel{B_{i}f_1}\longrightarrow S^iY_1)\times
(B_{k-i}X_2 \stackrel{B_{k-i}f_2}\longrightarrow S^{k-i}Y_2).
$$
\end{proof}

\section{A  power structure over the ring $K_0(\Map)$}\label{sec:effective}

\begin{theorem}\label{lambda-effective}
The $\lambda$-structures given  by the series 
$\zeta_{[X \stackrel{f}{\rightarrow} Y]}(t)$ and $\lambda_{[X \stackrel{f}{\rightarrow} Y]}(t)$
define one and the same power structure over the ring $K_0(\Map)$. 
Moreover, this power structure is effective.
\end{theorem}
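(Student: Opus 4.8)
The plan is to produce one explicit power structure over $K_0(\Map)$, generalizing Equation~(\ref{power-K_0}), and to recognize both $\zeta_{[X\to Y]}(t)$ and $\lambda_{[X\to Y]}(t)$ as specializations of it; effectiveness will then be read off directly from the formula. I first recall the bookkeeping linking the two notions: a $\lambda$-structure $\Lambda_a(t)$ induces a power structure in which $\Lambda_a(t)=(\Lambda_{\boldsymbol 1}(t))^a$, and, conversely, for a fixed power structure and any base series $S(t)=1+t+\ldots$ the rule $a\mapsto(S(t))^a$ is a $\lambda$-structure inducing that same power structure (this is the observation recorded in Section~\ref{sec:Power_lambda} before Proposition~\ref{prop2}). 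Consequently, to prove that $P_\zeta=P_\lambda$ (the power structures of Propositions~\ref{Kapranov-lambda} and~\ref{lambda-lambda2}) it is enough to exhibit one power structure $P$ over $K_0(\Map)$ in which $\zeta_{[X\to Y]}(t)=((1-t)^{-1})^{[X\to Y]}$ and $\lambda_{[X\to Y]}(t)=(1+t)^{[X\to Y]}$: by the base-series principle each of these equalities forces $P$ to coincide with $P_\zeta$, respectively $P_\lambda$. The two base series are forced by the point case: since $S^k(pt)=pt$ one has $\zeta_{\boldsymbol 1}(t)=(1-t)^{-1}$, and since $B_k(pt)=\emptyset$ for $k\ge 2$ one has $\lambda_{\boldsymbol 1}(t)=1+t$.

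The candidate $P$ is the evident analogue of (\ref{power-K_0}): for $A(t)=\boldsymbol 1+\sum_i[A_i\stackrel{g_i}{\to}B_i]\,t^i$ and exponent $[M\stackrel{f}{\to}N]$ I set
\[
(A(t))^{[M\to N]}=\boldsymbol{1}+\sum_{k\ge 1}\Bigg(\sum_{\{k_i\}:\sum_i ik_i=k}\Big[\ \big((M^{\sum_i k_i}\setminus\Delta)\times\prod_i A_i^{k_i}\big)/\prod_i S_{k_i}\ \longrightarrow\ \prod_i S^{k_i}(N\times B_i)\ \Big]\Bigg)t^k,
\]
where the map is induced block-wise by $f$ and the $g_i$: a configuration of distinct points of $M$ decorated by the $A_i$ is sent to the collection of its $f$- and $g_i$-images, collisions in $N$ being allowed. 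The source of each term is exactly the one occurring in (\ref{power-K_0}), so the verification that this assignment satisfies the axioms of a power structure is the verification of \cite{GLM-MRL} performed on sources, supplemented by the remark that the combinatorial bijections used there are natural and hence commute with the induced maps to the target symmetric powers.

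It remains to match the two specializations. Taking only $[A_1\to B_1]=\boldsymbol 1$ (that is $A(t)=1+t$) leaves the single term $k_1=k$, whose source is $B_kM$ and whose target map is $B_kf\colon B_kM\to S^kN$, so $(1+t)^{[M\to N]}=\lambda_{[M\to N]}(t)$ immediately. Taking all $[A_i\to B_i]=\boldsymbol 1$ (that is $A(t)=(1-t)^{-1}$) makes the source $\bigsqcup_{\{k_i\}}(M^{\sum k_i}\setminus\Delta)/\prod_i S_{k_i}=S^kM$, the stratification of the symmetric power by multiplicity type. I expect the genuine obstacle to lie here: the formula presents the $t^k$-coefficient with the ``block'' targets $\prod_i S^{k_i}N$, whereas $\zeta_{[M\to N]}(t)$ has the single term $[S^kM\stackrel{S^kf}{\to}S^kN]$, so one must prove that for every map $f\colon X\to Y$
\[
[\,S^kX\stackrel{S^kf}{\longrightarrow}S^kY\,]=\sum_{\{k_i\}:\sum_i ik_i=k}\big[\,(X^{\sum k_i}\setminus\Delta)/\textstyle\prod_i S_{k_i}\longrightarrow\textstyle\prod_i S^{k_i}Y\,\big]
\]
in $K_0(\Map)$. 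The mechanism is to decompose the source $S^kX$ by multiplicity type (the third defining relation) and then, on each stratum, to stratify further by the collision pattern of the images in $Y$; on each such piece the second relation cuts the target down to the relevant diagonal of $S^kY$, which is abstractly isomorphic to the corresponding product of symmetric powers of $Y$, so the first relation identifies the two maps. Carrying this bookkeeping through all multiplicity types and collision patterns is the main technical step.

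Finally, effectiveness is transparent from the displayed formula: for genuine maps $g_i$ and $f$ every coefficient is the class of a genuine map, since removing the diagonal $\Delta$ gives a Zariski locally closed (hence quasi-projective) variety and the quotient of a quasi-projective variety by a finite group is again quasi-projective, while the induced morphism to $\prod_i S^{k_i}(N\times B_i)$ is regular. Thus no virtual classes are introduced, the power structure is already defined over $S_0(\Map)$, and it is effective. Besides the target-reconciliation above, the only remaining point to check is that the block-wise maps descend to the finite quotients, which is immediate from the $\prod_i S_{k_i}$-equivariance of $f$ and the $g_i$.
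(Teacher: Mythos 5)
Your proposal is correct and follows essentially the same route as the paper: it writes down the same explicit configuration-space formula (your target $\prod_i S^{k_i}(N\times B_i)$ is exactly the paper's $\bigl(N^{\sum_i k_i}\times\prod_i Y_i^{k_i}\bigr)/\prod_i S_{k_i}$), verifies the power-structure axioms via the particle/configuration interpretation of \cite{GLM-MRL}, and identifies the two $\lambda$-structures by computing $(1+t)^{[M\to N]}$ and $(1+t+t^2+\ldots)^{[M\to N]}$, with effectiveness read off the formula. The only difference is that you spell out the cut-and-paste reconciliation of the targets $\prod_i S^{k_i}N$ versus $S^kN$ in the Kapranov case, a point the paper's proof passes over silently.
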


\begin{proof}
To prove the statement we shall give a direct (geometric) description of the corresponding power structure.
For a series 
$$
A(t):=\boldsymbol{1}+[X_1 \stackrel{f_1}{\rightarrow} Y_1] t+[X_2 \stackrel{f_2}{\rightarrow} Y_2] t^2+
\ldots \in \boldsymbol{1}+t K_0(\Map)[[t]],
$$
and for an element $m=[M \stackrel{f}{\rightarrow} N]\in K_0(\Map)$, let us define $(A(t))^m$ as
\begin{equation}\label{geometric-lambda}
 \boldsymbol{1}+
\sum_{k=1}^\infty
\left(
\sum_{\kk:\sum_i ik_i=k}
\left[\left(\left(M^{\sum_i k_i}\setminus\Delta \stackrel{f^{\sum_i k_i}}{\rightarrow} N^{\sum_i k_i}
\right)\times  
\prod_i (X_i \stackrel{f_i}{\rightarrow} Y_i)^{k_i}\right)\left/\prod_iS_{k_i}\right.
\right]
\right)
\cdot t^k,
\end{equation}
where $\kk=\{k_i: i\in\Z_{>0}, k_i\in\Z_{\ge0}\}$, $\Delta$ is the
"large diagonal" in $M^{\sum_i k_i}$ which consists of $(\sum_i k_i)$-tuples
of points of $M$ with at least two coinciding ones, the permutation group
$S_{k_i}$ acts simultaneously on the components of the factors $M^{k_i}$ and $N^{k_i}$ 
in $M^{\sum_i k_i}\setminus\Delta$ and in $N^{\sum_i k_i}$
and on the components of $(X_i \stackrel{f_i}{\rightarrow} Y_i)^{k_i}.$

In Equation  (\ref{geometric-lambda}), by 
$$
\left(\left(M^{\sum_i k_i}\setminus\Delta \stackrel{f^{\sum_i k_i}}{\rightarrow} N^{\sum_i k_i}
\right)\times  
\prod_i (X_i \stackrel{f_i}{\rightarrow} Y_i)^{k_i}\right)\left/\prod_iS_{k_i}\right.,
$$
we mean the map 
$$
\left(\left(M^{\sum_i k_i}\setminus\Delta \right) \times \prod_i X_i^{k_i}\right) \left/\prod_iS_{k_i}\right.
\to
\left(N^{\sum_i k_i}
\times  
\prod_i  Y_i^{k_i}\right)\left/\prod_iS_{k_i}\right.
$$
induced by $f$ and $f_i$.
(Pay attention that the action of the group $\prod_i S_{k_i}$ on the source
$\left(M^{\sum_i k_i}\setminus\Delta \right) \times \prod_i X_i$ is free.)

The fact that Equation~(\ref{geometric-lambda}) defines a power structure over the Grothendieck ring
$K_0(\Map)$ can be deduced from an interpretation of the coefficients of $t^k$ in it similar to the one
in \cite{GLM-MRL}. Let $X_0=pt$, let $\Gamma$ be the disjoint union $\bigsqcup\limits_{i=0}^{\infty} X_i$
and let $I:\Gamma\to\Z$ be the tautological function on $\Gamma$ which sends the component $X_i$ to $i$.
A representative of the coefficient of $t^k$ in (\ref{geometric-lambda}) can be identified with the
configuration space of maps $\psi:M\to\Gamma$ such that $\sum\limits_{x\in M}I(\psi(x))=k$.
This means that there are finitely many $x\in M$ such that $\psi(x)\not\in X_0$. Taking into account
only the points $x\in M$ with $\psi(x)\not\in X_0$, it is possible to say that we consider collections
of (non-coinciding) particles on $M$ with positive integer charges and with the space of internal states
of a particle with charge $i$ parametrized by the variety $X_i$, $i=1, 2, \ldots$ The coefficient of $t^k$
in (\ref{geometric-lambda}) is represented by the configuration space of collections of particles with
the total charge $k$ (cf. \cite{Gorsky}). These data correspond to the source of the map. The target is
represented by a similar configuration space of particles on $N$ (images of the particles on $M$ under the
map $f$) whose locations are permitted to coincide and whose spaces of internal states are parametrized
by the varieties $Y_i$. The map from the source to the target is defined in the obvious way (and is
determined by the maps $f$ and $f_i$).

Let us prove the properties (3)--(5) from the definition of a power structure (cf.~\cite{GLM-MRL}).

(3) Let
$$
A(t):=\boldsymbol{1}+[X_1 \stackrel{f_1}{\rightarrow} Y_1] t+[X_2 \stackrel{f_2}{\rightarrow} Y_2] t^2+
\ldots,
$$
$$
B(t):=\boldsymbol{1}+[X'_1 \stackrel{f'_1}{\rightarrow} Y'_1] t+[X'_2 \stackrel{f'_2}{\rightarrow} Y'_2] t^2+
\ldots,
$$
and $m=[M \stackrel{f}{\rightarrow} N]$.
The coefficient of $t^s$ in $A(t)B(t)$ is equal to
$$
\sum_{i=0}^s[X_i\times X'_{s-i}\to Y_i\times Y'_{s-i}]\,.
$$
The coefficient of $t^k$ in $\left(A(t)B(t)\right)^m$ is represented by the configuration space
of maps $\psi$ from $M$ to
$$
\bigsqcup_{s=0}^{\infty}\bigsqcup_{i=0}^s (X_i\times X'_{s-i})=\bigsqcup_{i,j=0}^{\infty}(X_i\times X'_{j})=
\left(\bigsqcup_{i=0}^{\infty}X_i\right)\times\left(\bigsqcup_{j=0}^{\infty}X'_j\right)
$$
such that
$$
\sum_{x\in M}\left(I(\pi_1\circ \psi(x))+I'(\pi_2\circ\psi(x))\right)=k\,.
$$
Here $\pi_1$ and $\pi_2$ are the projections of 
$\left(\bigsqcup\limits_{i=0}^{\infty}X_i\right)\times\left(\bigsqcup\limits_{j=0}^{\infty}X'_j\right)$
to the first and to the second factors respectively. This is the union for $\ell=0, 1, \ldots, k$
of the products of the configuration spaces of maps $\psi_1: M\to \bigsqcup\limits_{i=0}^{\infty}X_i$
and of maps $\psi_2: M\to \bigsqcup\limits_{j=0}^{\infty}X'_j$ with $\sum\limits_{x\in M}I(\psi_1(x))=\ell$
and $\sum\limits_{x\in M}I'(\psi_2(x))=k-\ell$ respectively. This is just a description of the
coefficient of $t^k$ in $\left(A(t)\right)^m\cdot\left(B(t)\right)^m$.

(4) Let $n=[P\stackrel{g}{\rightarrow} Q]$. We have 
$m+n=[M\sqcup P\stackrel{f\sqcup g}{\rightarrow} N\sqcup Q]$.
The coefficient of $t^k$ in $\left(A(t)\right)^{m+n}$ is represented by the configuration space
of maps $\psi$ from $M\sqcup P$ to $\bigsqcup\limits_{i=0}^{\infty}X_i$ such that
$\sum\limits_{x\in M\bigsqcup P}I(\psi(x))=k$. This is the union for $\ell=0, 1,\ldots, k$ of the products
of the configuration spaces of maps $\psi_1:M\to \bigsqcup\limits_{i=0}^{\infty}X_i$
and of maps $\psi_2:P\to \bigsqcup\limits_{i=0}^{\infty}X_i$ with $\sum\limits_{x\in M}I(\psi_1(x))=\ell$ and
$\sum\limits_{x\in P}I(\psi_2(x))=k-\ell$ respectively.
This is just a description of the
coefficient of $t^k$ in $\left(A(t)\right)^m\cdot\left(A(t)\right)^n$.

(5) Let, as above, $n=[P\stackrel{g}{\rightarrow} Q]$. We have 
$mn=[M\times P\stackrel{f\times g}{\rightarrow} N\times Q]$.
The coefficient of $t^s$ in $\left(A(t)\right)^n$ is represented by the configuration space of maps
$\psi:P\to \bigsqcup\limits_{i=0}^{\infty}X_i$ such that $\sum\limits_{x\in P}I(\psi(x))=s$
($s$ is {\em the weight} of the map $\psi$). The coefficient of $t^k$ in $\left(\left(A(t)\right)^n\right)^m$
is represented by the configuration space of maps $\check{\psi}$ from $M$ to the union of the
configuration spaces described above with the sum of weights equal to $k$. Such maps are in one-to-one
correspondence with the maps $\widehat{\psi}:M\times P\to \bigsqcup\limits_{i=0}^{\infty}X_i$
such that $\sum\limits_{x\in M\times P}I(\widehat{\psi}(x))=k$. This is just a description of the
coefficient of $t^k$ in $\left(A(t)\right)^{mn}$.

We have to show that Equation~(\ref{geometric-lambda}) gives:
\begin{equation}\label{Kapran}
 (\boldsymbol{1}+t+t^2+\ldots)^{[M \stackrel{f}{\rightarrow} N]}=\zeta_{[M \stackrel{f}{\rightarrow} N]}(t)\,,
\end{equation}
\begin{equation}\label{conf}
 (\boldsymbol{1}+t)^{[M \stackrel{f}{\rightarrow} N]}=\lambda_{[M \stackrel{f}{\rightarrow} N]}(t)\,.
\end{equation}
The coefficient of $t^k$ in $(1+t+t^2+\ldots)^{[M\stackrel{f}{\rightarrow} N]}$ is represented by the
configuration space of finite subsets of points in $M$ with (positive) multiplicities (since the map
from $M$ to $\bigsqcup\limits_{i=0}^{\infty}pt_i$ is defined by the subset of points $x\in M$ such that
$I(\psi(x))\ne 0$ and by their multiplicities $I(\psi(x))$). This means that this coefficient is equal to
$[S^kM\stackrel{S^kf}{\rightarrow} S^kN]$. This proves (\ref{Kapran}).

The only non-empty summand in the coefficient of $t^k$  in  
 $(\boldsymbol{1}+t)^{[M \stackrel{f}{\rightarrow} N]}$
corresponds to the partition $k_1=k$, $k_i=0$ for $i>1$, and is represented by the map 
$$
(M^k\setminus\Delta)/S_k=B_kM\to N^k/S_k=S^kN.
$$
This proves (\ref{conf}).

\end{proof}

\section{Generating series of Hilbert--Chow morphisms}\label{Hilbert}

Let $\LLL_v\in K_0(\Map)$ be the class of the map $\AA_{\C}^1\to pt$ from the complex affine line to
a one point set: the image of $\LLL$ under the inclusion $K_0(\Map)\subset K_0(\Map)$ defined by
$[X]\mapsto[X\to pt]$. (Do not mix $\LLL_v$ with the the image $\LLL_h$ of the element
$\LLL\in K_0(\Var)$ under the other embedding defined by $[X]\mapsto[X\stackrel{id}{\rightarrow} X]$.)

For a non-singular $d$-dimensional quasi-projective variety $X$, let ${\rm Hilb}^k_X$ be the Hilbert
scheme of zero-dimensional subschemes of length $k$ in $X$. 
One has the Hilbert-Chow morphism $\pi_k:{\rm Hilb}^k_X\to S^kX$ to the $k$th symmetric power of $X$. 
Let ${\rm Hilb}^k_{\C^d,0}$ be the Hilbert
scheme of zero-dimensional subschemes of length $k$ in $\C^d$ supported at the origin.

\begin{theorem}
Let $X$ be a non-singular $d$-dimensional quasi-projective variety. 
 In the Grothendieck ring $K_0(\Map)$ one has
 \begin{equation}\label{Hilb_stat}
 1+\sum_{k=1}^{\infty}[{\rm Hilb}^k_X\stackrel{\pi_k}{\rightarrow} S^kX]\cdot t^k=
\left(1+\sum_{k=1}^{\infty}[{\rm Hilb}^k_{\C^d,0}\to pt]\cdot t^k\right)^{[X \stackrel{id}{\rightarrow} X]}.
 \end{equation}
\end{theorem}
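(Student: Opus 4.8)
The plan is to compute the right-hand side of (\ref{Hilb_stat}) using the explicit geometric description (\ref{geometric-lambda}) of the power structure over $K_0(\Map)$ obtained in the proof of Theorem~\ref{lambda-effective}, and then to match it, term by term, with the stratification of the Hilbert scheme by the local types of its subschemes. The classical computation of $\sum_k[{\rm Hilb}^k_X]\,t^k$ as a power $\left(\sum_k[{\rm Hilb}^k_{\C^d,0}]\,t^k\right)^{[X]}$ over $K_0(\Var)$ (cf.~\cite{Michigan}) already identifies the \emph{sources}; the new content here is to carry along the \emph{targets} so as to recover the Hilbert--Chow morphism.

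First I would unwind the right-hand side. Taking $m=[X\stackrel{id}{\rightarrow}X]$ (so $M=N=X$, $f=id$) and the base series with $X_i={\rm Hilb}^i_{\C^d,0}$, $Y_i=pt$, formula (\ref{geometric-lambda}) writes the coefficient of $t^k$ as the sum over partitions $\kk$ with $\sum_i ik_i=k$ of the classes of the maps
$$
\phi_\kk\colon\left((X^{\sum_i k_i}\setminus\Delta)\times\prod_i({\rm Hilb}^i_{\C^d,0})^{k_i}\right)\!\left/\prod_i S_{k_i}\right. \longrightarrow \left(X^{\sum_i k_i}\right)\!\left/\prod_i S_{k_i}\right.=\prod_i S^{k_i}X,
$$
where $\phi_\kk$ forgets the Hilbert-scheme decorations (as $Y_i=pt$) and records only the charged locations of the points. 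In the particle language of the proof of Theorem~\ref{lambda-effective}, the source is the configuration space of finitely many distinct points of $X$ carrying positive charges summing to $k$, a charge-$i$ point being decorated by a length-$i$ subscheme of $\C^d$ supported at the origin.

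Next I would identify the left-hand side geometrically. Any length-$k$ zero-dimensional subscheme $Z\subset X$ splits uniquely as the disjoint union $Z=\bigsqcup_x Z_x$ of its local components, supported at the finitely many points $x$ of its support; grouping these points by the length $i$ of $Z_x$ produces a partition $\kk$ with $\sum_i ik_i=k$ and stratifies ${\rm Hilb}^k_X=\bigsqcup_\kk H_\kk$. Because $X$ is smooth of dimension $d$, the family of punctual Hilbert schemes over $X$ is Zariski-locally trivial with fiber ${\rm Hilb}^i_{\C^d,0}$, which is exactly the local input used in the $K_0(\Var)$ proof (\cite{Michigan}); hence each $H_\kk$ is piecewise isomorphic to the source of $\phi_\kk$, the distinct support points being decorated by their local subschemes. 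Under this identification the Hilbert--Chow morphism sends $Z$ to $\sum_x \mathrm{length}(Z_x)[x]=\sum_i i\,(\text{charge-}i\text{ locations})$, that is $\pi_k|_{H_\kk}=\sigma_\kk\circ\phi_\kk$, where $\sigma_\kk\colon\prod_i S^{k_i}X\to S^kX$ is the total-charge map $(D_i)_i\mapsto\sum_i iD_i$.

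The one point needing care is that (\ref{geometric-lambda}) presents the target of $\phi_\kk$ as $\prod_i S^{k_i}X$, whereas (\ref{Hilb_stat}) asks for target $S^kX$; the last step is to see these yield the same class in $K_0(\Map)$. Here I would use that $\phi_\kk$ lands in the open locus $U_\kk\subset\prod_i S^{k_i}X$ of configurations whose points are pairwise distinct across all charges, and that on $U_\kk$ the map $\sigma_\kk$ is an isomorphism onto its locally closed image $V_\kk\subset S^kX$ (from a $0$-cycle with distinct points one recovers each $D_i$ as the points of multiplicity $i$). Restricting both targets to the image by the cut-and-paste relations $2)$ and $3)$, and invoking the equivalence relation $1)$ through the isomorphism $\sigma_\kk|_{U_\kk}\colon U_\kk\xrightarrow{\sim}V_\kk$, gives $[H_\kk\xrightarrow{\phi_\kk}\prod_i S^{k_i}X]=[H_\kk\xrightarrow{\pi_k}S^kX]$ for each $\kk$; summing over $\kk$ and reassembling ${\rm Hilb}^k_X$ by relation $3)$ yields (\ref{Hilb_stat}). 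I expect the main obstacle to be the geometric input on local triviality, namely that the punctual Hilbert-scheme fibration over a smooth $X$ is Zariski-locally trivial so that each stratum is genuinely a product in $K_0(\Map)$, together with the bookkeeping that exhibits $\pi_k$ as the composite $\sigma_\kk\circ\phi_\kk$; the target-matching argument, though the genuinely new ingredient here, is then essentially formal.
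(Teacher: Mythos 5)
Your proposal is correct and rests on the same geometric input as the paper, namely the local analysis from \cite{Michigan} showing that, Zariski-locally on a smooth $X$, a length-$k$ subscheme is the same thing as a finite set of distinct points decorated by punctual subschemes from $\bigsqcup_q{\rm Hilb}^q_{\C^d,0}$; but you assemble the argument differently. The paper takes a finite Zariski open covering $\{U_i\}$ of $X$ over which this identification holds, proves the identity for each $[{\rm Hilb}^k_{U_i}\to S^kX]$ directly from the particle description of (\ref{geometric-lambda}), and then recovers (\ref{Hilb_stat}) by inclusion--exclusion over the covering; you instead stratify ${\rm Hilb}^k_X=\bigsqcup_\kk H_\kk$ globally by the partition recording the local lengths, match each stratum with the corresponding summand of (\ref{geometric-lambda}) up to piecewise isomorphism, and sum. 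The two routes are essentially interchangeable (your piecewise-triviality of the strata is obtained by refining the same open cover into locally closed pieces, which is what inclusion--exclusion encodes in $K_0(\Map)$). Where your write-up adds genuine value is the explicit target-matching step: formula (\ref{geometric-lambda}) produces maps into $\prod_i S^{k_i}X$ while the theorem wants $\pi_k$ into $S^kX$, and your observation that $\sigma_\kk$ restricts to an isomorphism from the distinct-point locus $U_\kk$ onto its locally closed image $V_\kk\subset S^kX$, so that relations 1)--3) identify the two classes, is exactly the point the paper passes over silently with ``alongside with the geometric description \ldots this gives''. The argument is sound.
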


\begin{proof}
The arguments of \cite{Michigan} show that there exists a (finite) Zariski open covering $\{U_i\}$
of $X$ ($X=\bigcup_i U_i$) such that a zero-dimensional subscheme of length $k$ in $U_i$ is determined by
a (finite) collection of points of $U_i$ with a subscheme from
$\bigsqcup\limits_{q=0}^{\infty}{\rm Hilb}^q_{\C^d,0}$ associated to each of them so that the sum of their
lengths $q$ is equal to $k$. Alongside with the geometric description (\ref{geometric-lambda}) of the
power structure over $K_0(\Map)$ this gives
 $$
 1+\sum_{k=1}^{\infty}[{\rm Hilb}^k_{U_i}\stackrel{\pi_k}{\rightarrow} S^kX]\cdot t^k=
\left(1+\sum_{k=1}^{\infty}[{\rm Hilb}^k_{\C^d,0}\to pt]\cdot t^k\right)^{[U_i \stackrel{id}{\rightarrow} U_i]}.
 $$
Applying the inclusion-exclusion formula one gets (\ref{Hilb_stat}).
\end{proof}

For $d=2$ one has
$$
1+\sum_{k=1}^{\infty}[{\rm Hilb}^k_{\C^2,0}\to pt]\cdot t^k=\prod_{i=1}^{\infty}(1-\LLL_v^{i-1}t^i)
\in 1+K_0(\Map)[[t]]\,.
$$
(This is a trivial reformulation of the equation
$$
1+\sum_{k=1}^{\infty}[{\rm Hilb}^k_{\C^2,0}]\cdot t^k=\prod_{i=1}^{\infty}(1-\LLL^{i-1}t^i)
\in 1+K_0(\Var)[[t]]
$$
proved in \cite{Gottsche}). This implies the following statement.

\begin{corollary} For a non-singular quasi-projective surface $X$, one has
 \begin{equation}
 1+\sum_{k=1}^{\infty}[{\rm Hilb}^k_X\stackrel{\pi_k}{\rightarrow} S^kX]\cdot t^k=
\left(\prod_{i=1}^{\infty}(1-\LLL_v^{i-1}t^i)\right)^{[X \stackrel{id}{\rightarrow} X]}\in 1+K_0(\Map)[[t]]\,.
 \end{equation}
\end{corollary}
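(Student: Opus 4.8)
The plan is to read off the corollary as the specialization of the preceding theorem to dimension $d=2$, with no genuine new geometric content. First I would apply Equation~(\ref{Hilb_stat}) to the case where $X$ is a non-singular quasi-projective surface, which gives
$$
1+\sum_{k=1}^{\infty}[{\rm Hilb}^k_X\stackrel{\pi_k}{\rightarrow} S^kX]\cdot t^k=
\left(1+\sum_{k=1}^{\infty}[{\rm Hilb}^k_{\C^2,0}\to pt]\cdot t^k\right)^{[X \stackrel{id}{\rightarrow} X]}.
$$
The entire task then reduces to identifying the base series $1+\sum_{k\ge1}[{\rm Hilb}^k_{\C^2,0}\to pt]\,t^k$ with the explicit product $\prod_{i\ge1}(1-\LLL_v^{i-1}t^i)$.

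Second, I would establish that identity by transporting G\"ottsche's formula
$$
1+\sum_{k=1}^{\infty}[{\rm Hilb}^k_{\C^2,0}]\cdot t^k=\prod_{i=1}^{\infty}(1-\LLL^{i-1}t^i)
$$
from $K_0(\Var)$ to $K_0(\Map)$. The relevant observation is that the map $[Z]\mapsto[Z\to pt]$ is an injective ring homomorphism $K_0(\Var)\to K_0(\Map)$, so it sends sums to sums and products to products and extends coefficientwise to a map $1+tK_0(\Var)[[t]]\to 1+tK_0(\Map)[[t]]$. Applying it to G\"ottsche's identity, using $[{\rm Hilb}^k_{\C^2,0}]\mapsto[{\rm Hilb}^k_{\C^2,0}\to pt]$ and $\LLL\mapsto\LLL_v$ by the very definition of $\LLL_v$, yields the displayed product expression. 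This is exactly the \emph{trivial reformulation} recorded just above the corollary, so in the write-up it may simply be cited rather than reproved.

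Finally, I would substitute this product into the base of the power expression. Since the power structure over $K_0(\Map)$ is defined for an arbitrary series in $1+tK_0(\Map)[[t]]$, and the two expressions for the base series name the same element of that set, the substitution is legitimate and produces
$$
\left(\prod_{i=1}^{\infty}(1-\LLL_v^{i-1}t^i)\right)^{[X \stackrel{id}{\rightarrow} X]},
$$
which is precisely the right-hand side of the corollary. I expect no real obstacle here: the only point deserving a word of care is the legitimacy of replacing one expression for the base series by an equal one inside the power operation, and this is immediate because the power is a function of the base series as an element of $1+tK_0(\Map)[[t]]$ rather than of any particular way of writing it. Everything else is formal substitution into Equation~(\ref{Hilb_stat}).
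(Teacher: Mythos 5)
Your proposal is correct and follows exactly the paper's route: the paper likewise obtains the corollary by specializing the theorem's Equation~(\ref{Hilb_stat}) to $d=2$ and substituting G\"ottsche's product formula for $1+\sum_k[{\rm Hilb}^k_{\C^2,0}]t^k$, transported to $K_0(\Map)$ via $[Z]\mapsto[Z\to pt]$ (which the paper calls a ``trivial reformulation''). Your extra remark that the power operation depends only on the base series as an element of $1+tK_0(\Map)[[t]]$ is a harmless elaboration of the same argument.
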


\section{Versions of the described power structure}\label{sec:versions}

One can see that analogues of the power structure on the Grothendieck ring of maps $K_0(\Map)$
defined by Equation~(\ref{geometric-lambda}) exist in the following settings.

\begin{enumerate}
 \item[{\bf 1.}] {\bf The relative setting.} The Grothendieck group $K_0(\Map/\varphi)$
 of maps over a fixed map $\varphi:S_1\to S_2$ is defined as the Grothendieck group generated by
 the classes of commutative diagrams of the form
 $$ 
\begin{CD}
X@>f>> Y\\
@VVV                @VVV\\
S_1 @>{\rm \varphi}>> S_2
\end{CD}
 $$ 
 with the natural analogues of the relations 1)--3). The multiplication in $K_0(\Map/\varphi)$ is defined by
 the fibre products over $S_1$ and $S_2$. An analogue of Equations~(\ref{geometric-lambda}) defines an 
 (effective) power structure over $K_0(\Map/\varphi)$. In this analogue all the products
 (including $M^{\sum k_i}$ and $N^{\sum k_i}$ considered as products of $\sum k_i$ copies of $M$ and of $N$
 respectively) are fibre products over $S_1$ or $S_2$.

 \item[{\bf 2.}] {\bf The equivariant setting.} For a finite group $G$, the Grothendieck ring $K_0^G(\Map)$
 of G-equivariant maps is defined as the Grothendieck ring generated by the classes
 $[X\stackrel{f}{\rightarrow}Y]$, where $X$ and $Y$ are complex quasi-projective $G$-varieties and $f$
 is a $G$-equivariant map. All the maps in  (\ref{geometric-lambda}) are $G$-equivariant.

\item[{\bf 3.}] {\bf The relative setting over an Abelian monoid.} Let $\mathfrak{M}$ be an Abelian monoid
 with zero. As in the relative setting above, the Grothendieck group $K_0(\Map/\mathfrak{M})$
 of maps over the monoid $\mathfrak{M}$ is defined as the Grothendieck group generated by the classes
 of commutative diagrams of the form
 $$ 
\begin{CD}
X@>f>> Y\\
@VV{P_X}V                @VV{p_Y}V\\
\mathfrak{M} @>{\rm id}>> \mathfrak{M}
\end{CD}
 $$ 
The difference is in the definitions of the multiplication in $K_0(\Map/\mathfrak{M})$
and of the maps to $\mathfrak{M}$ of the summands in Equations~(\ref{geometric-lambda}).
The multiplication is defined via the map $\mathfrak{M}\times\mathfrak{M}\to\mathfrak{M}$
applied to the usual Cartesian product (with the target $\mathfrak{M}\times\mathfrak{M}$).
To define the map to $\mathfrak{M}$ on the summands of (\ref{geometric-lambda}), it is
useful to use consider them as configuration spaces of particles on $M$ with some charges
and some weights. The weights of a particle $s\in M$ of charge $n$ (and thus of the internal state
$\phi$ from the variety $X_n$) is defined as $np_M(s)+p_{X_n}(\phi)$, where $p_M$ and $p_{X_n}$ are
the maps from the corresponding varieties to $\mathfrak{M}$. The weight of a collection of particles
is defined as the sum of the weights of the individual particles. Cf. \cite{M_Sh}. 

\item[{\bf 4.}] {\bf The relative setting over a morphism of Abelian monoids.} One can see that
the definition of the Grothendieck ring and of the power structure over it from {\bf 3} can be extended 
to maps over a fixed morphism of Abelian monoids $\varphi:\mathfrak{M}_1\to\mathfrak{M}_2$.
\end{enumerate}

\end{document}